\documentclass[11pt]{amsart}
\textheight 8.5in \textwidth 6.5in \evensidemargin .1in \oddsidemargin .1in \topmargin .25in \headsep .1in \headheight 0.2in \footskip .5in

\usepackage{graphicx}
\usepackage[mathcal]{euscript}

\newtheorem{theorem}{Theorem}[section]
\newtheorem{lemma}[theorem]{Lemma}

\newtheorem{proposition}[theorem]{Proposition}

\theoremstyle{definition}
\newtheorem{definition}[theorem]{Definition}

\theoremstyle{remark}

\numberwithin{equation}{section}

\begin{document}

\title{On knot Floer width and Turaev genus}

\author{Adam Lowrance}
\address{Mathematics Department\\
Louisiana State University\\
Baton Rouge, Louisiana} \email{lowrance@math.lsu.edu}

\subjclass{}
\date{}

\begin{abstract}
To each knot $K\subset S^3$ one can associated its knot Floer homology $\widehat{HFK}(K)$, a finitely generated bigraded abelian group. In general, the nonzero ranks of these homology groups lie on a finite number of slope one lines with respect to the bigrading. The width of the homology is, in essence, the largest horizontal distance between two such lines. Also, for each diagram $D$ of $K$ there is an associated Turaev surface, and the Turaev genus is the minimum genus of all Turaev surfaces for $K$. We show that the width of knot Floer homology is bounded by Turaev genus plus one. Skein relations for genus of the Turaev surface and width of a complex that generates knot Floer homology are given.
\end{abstract}

\maketitle

\section{Introduction}
\label{introduction} Knot Floer homology is an invariant introduced by Ozsv\'ath and Szab\'o (cf. \cite{oz}) and independently by Rasmussen (cf. \cite{ras}) that associates to each knot $K\subset S^3$ a bigraded abelian group $\widehat{HFK}(K)$ whose graded Euler characteristic is the symmetric Alexander polynomial of $K$. The boundary map of the complex that generates knot Floer homology involves counting holomorphic disks in the symmetric product of a Riemann surface. Manolescu, Ozsv\'ath, and Sarkar proved that the boundary map has a combinatorial description (cf. \cite{comb}); however,  knot Floer homology can still be challenging to compute for knots with many crossings.

Since $\widehat{HFK}(K)$ is a finitely generated bigraded abelian group, it is nontrivial in only finitely many bigradings. These nontrivial groups arise on a finite number of slope one lines with respect to the bigrading. Knot Floer width $w_{HF}(K)$ is the largest difference between the $y$-intercepts of two lines that support $\widehat{HFK}(K)$ plus one. 

Each knot diagram has an associated Turaev surface, an unknotted oriented surface on which the knot has an alternating projection. The Turaev genus of a knot $g_T(K)$ is the minimum genus over all Turaev surfaces for the knot. A precise description of the Turaev surface is given in Section \ref{ribsec}. Originally, the Turaev surface was developed to answer questions about the Jones polynomial (cf. \cite{turvpaper}). The genus of this surface gives a natural bound for width of the reduced Khovanov homology $w_{Kh}(K)$, a homology theory for knots whose graded Euler characteristic gives the Jones polynomial (cf. \cite{man}, see also \cite{stoltz}). Manturov showed that $w_{Kh}(K)\leq g_T(K)+1$. The main result of this paper proves an analogous theorem for knot Floer homology. This is the first known application of the Turaev surface in knot Floer homology, and one wonders if there could be more.
\begin{theorem}
\label{main} Let $K\subset S^3$ be a knot. The knot Floer width of $K$ is bounded by the Turaev genus of $K$ plus one: $$w_{HF}(K)\leq g_T(K)+1.$$
\end{theorem}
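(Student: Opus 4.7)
The strategy is to associate to each diagram $D$ of $K$ a chain complex $C(D)$ whose homology is $\widehat{HFK}(K)$ together with a combinatorial width $w(C(D))$ satisfying $w_{HF}(K) \le w(C(D))$, and then to prove that $w(C(D)) \le g_T(D) + 1$ for every $D$. Minimizing over all diagrams of $K$ then gives the theorem. The complex $C(D)$ I have in mind is built out of the crossings of $D$ in the spirit of a cube of resolutions, so that resolving a single crossing is visible at the chain level and the $\delta$-gradings of its generators are explicit.

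The bound $w(C(D)) \le g_T(D) + 1$ would then follow from two parallel skein inequalities, matching the two pieces announced in the abstract. First, a width skein inequality: at a distinguished crossing of $D$, the complex $C(D)$ decomposes as a mapping cone of a morphism between the complexes of the two resolutions $D_0$ and $D_\infty$, coming from a knot Floer skein exact triangle. Tracking the bigrading shifts produced by this cone, I would deduce an inequality of the form
\[
w(C(D)) \le w(C(D_0)) + w(C(D_\infty)) + c
\]
for some universal constant $c$. Second, a Turaev genus skein inequality of the same shape: the Turaev surface $F(D)$ is built from the Seifert-circle projections of the all-$A$ and all-$B$ resolutions, bridged by bands at each crossing, and resolving a single crossing either merges two adjacent circles or splits one in two on one side of $F$. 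An Euler-characteristic computation then yields $g_T(D) \le g_T(D_0) + g_T(D_\infty) + c$ with the same constant.

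Given both skein inequalities, I would proceed by induction on a complexity measure adapted to the Turaev surface (for instance, on the number of crossings where the chosen orientation of $F(D)$ disagrees with an alternating model). The base case is supplied by alternating diagrams, which have $g_T = 0$ and give rise to Floer-thin knots by Ozsv\'ath--Szab\'o, so $w_{HF} = 1 = g_T + 1$. At each induction step, a distinguished crossing is chosen so that both skein relations move strictly closer to an alternating resolution, and the two inequalities are applied in tandem until both sides terminate at the alternating base case.

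The main obstacle I anticipate is the width skein inequality with the correct constant. The mapping cone in the knot Floer skein triangle glues the two resolution complexes by a morphism whose bigrading shift depends on the resolution type, and one must verify that this cone does not widen the $\delta$-grading range beyond the combined widths of the two subcomplexes plus the fixed constant $c$ that appears on the Turaev genus side. A secondary subtlety is arranging the induction so that the chosen sequence of resolved crossings is compatible with a Turaev surface that actually realizes $g_T(K)$, rather than an arbitrary one, so that the accounting of Turaev genus stays in lockstep with the accounting of complex width throughout the induction.
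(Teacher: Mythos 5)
Your plan has a genuine gap at its load-bearing step, and it also misses the device that makes the paper's argument work. The paper never invokes a skein exact triangle or a mapping cone for $\widehat{HFK}$. Instead it introduces a purely diagrammatic quantity: the width $w(D)$ of the set of Kauffman state \emph{generators} of the complex, measured by the spread of $A(s)-M(s)$, with no differential involved. Since the Kauffman state complex generates $\widehat{HFK}(K)$, one gets $w_{HF}(K)\le w(D)$ for free (Equation \ref{homineq}). The heart of the proof is then the exact identity $w(D)=g(G(D))+1$ for \emph{every} diagram $D$ (Theorem \ref{width}), proved not by resolving crossings but by \emph{changing} them: a crossing change preserves the underlying $4$-valent graph and puts Kauffman states in bijection, and both $w(D)$ and $g(G(D))$ change by $+1$, $0$, or $-1$ according to exactly the same combinatorial criterion, namely whether the dual pair of edges $e_+,e_-$ at that crossing lie in a positive, respectively negative, cycle of the checkerboard graphs (Theorems \ref{widthm} and \ref{genthm}). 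Since every diagram is reachable from an alternating one (where $w=1$ and $g=0$) by crossing changes, the identity follows, and minimizing over diagrams at the very end gives the theorem. Your worry about staying ``in lockstep'' with a genus-minimizing Turaev surface evaporates in this approach, because the identity holds for all diagrams and the minimization is deferred to the last line.

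The concrete problems with your route are these. First, the width skein inequality you need is for the \emph{homology} (or for a cone computing it) under unoriented resolution into $D_0$ and $D_\infty$; the standard knot Floer skein exact sequence is for the oriented triple $(L_+,L_-,L_0)$, and no unoriented triangle with controlled $\delta$-shifts is available to you here, so the inequality $w(C(D))\le w(C(D_0))+w(C(D_\infty))+c$ is unsupported. Second, resolving a crossing changes the link and typically produces split link diagrams, forcing you to set up the entire theory for links with normalizations before the induction can even start; the paper does prove a skein relation (Theorem \ref{skthm} and Equations \ref{genskein}--\ref{widthskein}), but it is a four-term \emph{exact} relation involving both $L_+$ and $L_-$, it is proved by a direct count of state circles rather than by any exact triangle, it applies to the diagrammatic width rather than to $w_{HF}$, and the paper explicitly notes it yields only upper bounds. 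Your two-term inequalities with a common constant $c$ would in any case lose the exact equality $w(D)=g(G(D))+1$ that the theorem ultimately rests on, and your induction would additionally have to control the $L_-$ term, which your complexity measure does not address.
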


In \cite{alt}, Ozsv\'ath and Szab\'o classify knot Floer homology for alternating knots. This classification has a nice consequence: if $K$ is an alternating knot, then $w_{HF}(K)=1$. It is also known that $K$ is alternating if and only if $g_T(K)=0$ (cf. \cite{das}). The main result is a generalization of these two facts. This also establishes a common bound for knot Floer and reduced Khovanov width. In general, how these two quantities compare to one another is unknown. Using the tables in \cite{bald} and Bar-Natan's knot atlas, one can compute knot Floer width $w_{HF}(K)$ and reduced Khovanov width $w_{Kh}(K)$ for knots with small crossings, which results in the following observation: if $K$ is a knot with less than 12 crossings, then $w_{HF}(K)=w_{Kh}(K)$. In a recent paper, Manolescu and Ozsv\'ath show that for quasi-alternating links, both $w_{Kh}(K)$ and $w_{HF}(K)$ are equal to one (cf. \cite{mano}).

This paper is organized as follows. Section \ref{widsec} describes the width of the knot Floer complex and how width behaves under a crossing change. Section \ref{ribsec} defines the Turaev surface and describes an algorithm for computing the genus of this surface. Moreover, the behavior of this genus under a crossing change is given. Section \ref{mainsec} gives the proof of the main result and describes an example. Section \ref{skeinsec} gives skein relations for the genus of the Turaev surface and for width of a complex that generates knot Floer homology.\bigskip

Special thanks is given to Scott Baldridge; his guidance has been instrumental in the completion of this paper. The author also thanks Oliver Dasbach, Neal Stoltzfus, and Brendan Owens for many helpful conversations.

\section{Knot Floer Width}
\label{widsec}
\subsection{Kauffman States and the Knot Floer Complex}

The chain complex used to generate knot Floer homology can be described using Kauffman states. These states and their relations to checkerboard graphs are discussed thoroughly by Kauffman (cf. \cite{kauf}, see also \cite{gilmer}). The set of Kauffman states generate (as $\mathbb{Z}$-modules) the chain complex that yields knot Floer homology (cf. \cite{alt}).

Let $D$ be the diagram of an oriented knot $K\subset S^3$ and $\Gamma$ be the 4-valent graph embedded in the plane obtained by changing each crossing of $D$ to a vertex. Choose a marked edge $\varepsilon$ in $\Gamma$, and let $Q$ and $R$ be the two faces of $\Gamma$ that are incident to $\varepsilon$. At each crossing, there are four local faces (not necessarily distinct). A {\it Kauffman state} for $(D,\varepsilon)$ is a map that assigns to each vertex of $\Gamma$ one of the four local faces such that each face of $\Gamma$ except $Q$ and $R$ is assigned to exactly one crossing. This assignment is indicated by placing a dot in one of the four local faces at each crossing, as in Figure \ref{kstate}.

Let $S=S(D,\varepsilon)$ be the set of all Kauffman states for the diagram $D$ with marked edge $\varepsilon$. Define two functions $A:S\to \mathbb{Z}$ and $M:S\to \mathbb{Z}$, called the Alexander filtration level and the Maslov grading respectively. For each vertex in $\Gamma$, the choice of a local face determines the local contribution to both the Maslov and the Alexander gradings as shown in Figures \ref{alex} and \ref{maslov}.
\begin{figure}[h]
\includegraphics[scale=0.7]{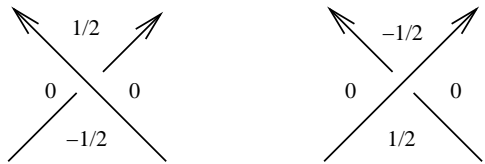}
\caption{The local Alexander filtration level} \label{alex}
\end{figure}
\begin{figure}[h]
\includegraphics[scale=0.7]{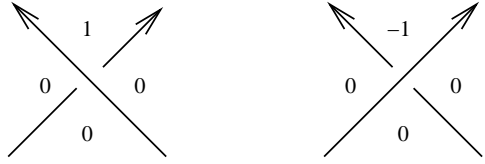}
\caption{The local Maslov grading} \label{maslov}
\end{figure}
The Maslov grading is defined to be the sum of all local Maslov contributions, and the Alexander filtration level is the sum of all local Alexander contributions.

\begin{figure}[h]
\includegraphics[scale=0.5]{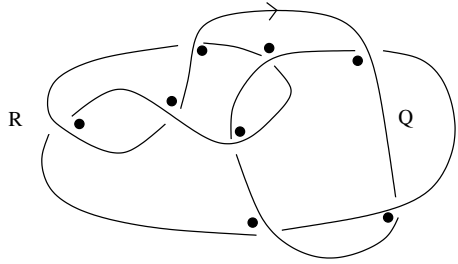}
\caption{This Kauffman state $s$ for the $8_{19}$ knot has $A(s)=-2$ and $M(s)=-5$.} \label{kstate}
\end{figure}

The Kauffman states of a knot diagram $(D,\varepsilon)$ are supported on a finite number of Alexander-Maslov bigrading diagonals. Let 
$$\Delta=\text{max}\{A(s)-M(s) | s\in S\},$$
and 
$$\delta=\text{min}\{A(s)-M(s) | s\in S\}.$$
\begin{definition}
The {\it  width} $w(D,\varepsilon)$ of diagram $D$ of an oriented knot $K$ with distinguished edge $\varepsilon$ is defined to be $w(D,\varepsilon)=\Delta-\delta+1$.
\end{definition}

\subsection{Checkerboard Graphs and a Graph Theoretic Interpretation of Kauffman States}

Recall that there is a graph-theoretic interpretation of Kauffman states (cf. \cite{kauf}). Color the faces of $\Gamma$ black or white in a checkerboard fashion, following the rule that no two faces that share an edge are colored the same. This gives rise to two graphs $T_1$ and $T_2$. The vertices of $T_1$ correspond to the black faces, and the edges of $T_1$ connect vertices whose corresponding black faces are incident to a common vertex in $\Gamma$. Similarly, the vertices of $T_2$ correspond to white faces, and the edges of $T_2$ connect vertices whose corresponding white faces are incident to a common vertex in $\Gamma$. Moreover, label each edge of $T_1$ and $T_2$ by one of the following: $\alpha_+,\alpha_-,\beta_-$ or $\beta_+$, as shown in Figure \ref{edgemark}. 

\begin{figure}[h]
\includegraphics[scale=0.7]{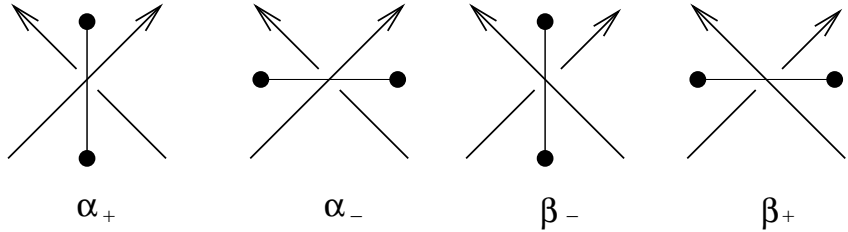}
\caption{The edges in the checkerboard graphs are labeled accordingly.} \label{edgemark}
\end{figure}

Observe that the two checkerboard graphs are embedded in the plane and are dual to one another. Furthermore, note that $\alpha_+$ edges are dual to $\alpha_-$ edges, and $\beta_+$ edges are dual to $\beta_-$ edges. An example of the $8_{19}$ knot and its corresponding checkerboard graphs is given in Figures \ref{shade} and \ref{check}.

\begin{figure}[h]
\includegraphics[scale=.3]{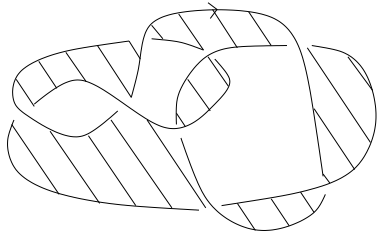}
\caption{The knot $8_{19}$ with a checkerboard coloring} \label{shade}
\end{figure}

\begin{figure}[h]
\includegraphics[scale=.4]{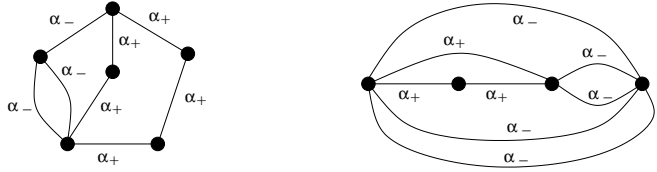}
\caption{The checkerboard graphs for the diagram of $8_{19}$ given in Figure \ref{shade}} \label{check}
\end{figure}

The Kauffman states $S(D,\varepsilon)$ are in 1-1 correspondence with pairs of spanning trees $t_1\subset T_1$ and $t_2\subset T_2$ satisfying the condition that each vertex of $\Gamma$ has exactly one associated edge in $t_1$ or $t_2$ (cf. \cite{kauf}). Observe that this implies if two edges $x$ in $T_1$ and $y$ in $T_2$ are dual to one another, then either $x$ is in $t_1$ or $y$ is in $t_2$. In this paper, a Kauffman state $s\in S(D,\varepsilon)$ will often be identified with a pair of such spanning trees and written as $s=(t_1,t_2)$. The roots of these spanning trees are the vertices corresponding to the two faces $Q$ and $R$ that are incident to $\varepsilon$. Make $t_1$ and $t_2$ into directed graphs by choosing the head of each edge to point away from the root (see Figure \ref{span}).
\begin{figure}[h]
\includegraphics[scale=.5]{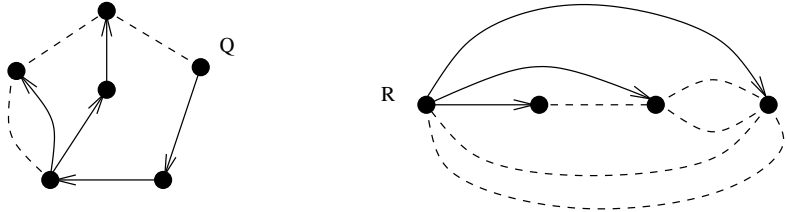}
\caption{The two spanning trees $t_1$ and $t_2$ for the state shown in Figure \ref{kstate}. Solid arcs indicate edges that are in the spanning tree. Dashed arcs indicate edges in the checkerboard graphs but not in the spanning tree.} \label{span}
\end{figure}

\subsection{Width of a Diagram}
The Kauffman state complex depends on the choice of marked edge, however, the width does not.

\begin{proposition}
\label{widind} Let $D$ be an oriented knot diagram, and let $\varepsilon$ and $\varepsilon^\prime$ be marked edges in $\Gamma$. Then  $w(D,\varepsilon)=w(D,\varepsilon^\prime)$.
\end{proposition}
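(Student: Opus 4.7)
The plan is to construct, for any two marked edges $\varepsilon,\varepsilon^\prime$ of $\Gamma$, a bijection $\phi\colon S(D,\varepsilon)\to S(D,\varepsilon^\prime)$ under which the diagonal grading $A-M$ shifts by a state-independent constant. Since $\Delta$ and $\delta$ are by definition the maximum and minimum of $A-M$ over the corresponding state set, such a bijection immediately forces $\Delta-\delta$ to agree on the two sides, and hence $w(D,\varepsilon)=w(D,\varepsilon^\prime)$. I would first reduce to the case where $\varepsilon$ and $\varepsilon^\prime$ bound a common face $Q$, since any two edges of $\Gamma$ can be joined by a chain of face-adjacent edges (using only that $\Gamma$ is connected), and the general case follows by induction. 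Let $R,R^\prime$ be the other faces incident to $\varepsilon,\varepsilon^\prime$ respectively; then $R$ and $R^\prime$ have the same color (opposite to that of $Q$), and without loss of generality both are black, so both correspond to vertices of $T_1$.

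In the spanning-tree dictionary reviewed in Section \ref{widsec}, the admissibility condition on a pair $(t_1,t_2)$ — that each crossing of $\Gamma$ have exactly one incident edge in $t_1\cup t_2$ — makes no reference to a marked edge, so the \emph{same} underlying family of admissible pairs parametrizes both $S(D,\varepsilon)$ and $S(D,\varepsilon^\prime)$; only the prescribed roots change. Define $\phi$ by keeping the pair $(t_1,t_2)$ unchanged while moving the root of $t_1$ from $R$ to $R^\prime$ (the root of $t_2$ stays at $Q$). This is manifestly a bijection with inverse given by re-rooting back at $R$.

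Re-rooting $t_1$ at $R^\prime$ flips the orientations of precisely the edges on the unique $R$-to-$R^\prime$ path $P$ in $t_1$, and at each corresponding crossing the assigned local face switches from one black face to the other (no white face assignments are touched, and no crossings outside $P$ are touched). The total change $A(\phi(s))-M(\phi(s))-(A(s)-M(s))$ is therefore the sum over $v\in P$ of local differences read off from Figures \ref{alex} and \ref{maslov}. For this change to be state-independent, it suffices to establish the local identity: at every crossing, the two local black face choices yield the same value of $A-M$, and the same for the two local white face choices. This identity, a finite case check on the positive and negative crossing types against the tables in those figures, is the essential obstacle; once it is in hand, $\phi$ preserves $A-M$ state by state and the proposition follows immediately.
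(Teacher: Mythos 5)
Your proposal is correct and takes essentially the same route as the paper: both pass to the spanning-tree description, observe that the underlying admissible pairs $(t_1,t_2)$ are independent of the marked edge so that re-rooting gives a canonical bijection of states, and reduce everything to the local fact that the $A-M$ contribution at a crossing does not depend on which of the two same-colored faces receives the dot. The one step you defer --- the finite check against Figures \ref{alex}, \ref{maslov}, and \ref{edgemark} --- is carried out in the paper and does come out as you predict: the local difference is $\frac{1}{2}$ for an $\alpha_+$ edge, $-\frac{1}{2}$ for a $\beta_-$ edge, and $0$ for $\alpha_-$ and $\beta_+$ edges, in each case independent of which endpoint is the head, so $A(s)-M(s)$ is preserved exactly (your adjacency reduction and induction are harmless but unnecessary, since the check handles all re-oriented edges at once).
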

\begin{proof}
Consider the Kauffman state $s=(t_1,t_2)$ as a pair of rooted spanning trees in $T_1$ and $T_2$. The dots of the state $s$ can be recovered as follows. Let $x$ be a directed edge in either $t_1$ or $t_2$. Then $x$ has an associated crossing $c$ in $D$ and the head and tail of $x$ lie in two different local faces around $c$. The local face of $c$ that contains the dot for $s$ is the face that contains the head of $x$. Changing the marked edge in $D$ corresponds to (possibly) changing the root in $t_1$ or $t_2$. This implies that the direction of the edge $x$ may change. However, the local difference between the Alexander and Maslov grading does not depend on the endpoint of $x$ chosen. Notice if $x$ is marked $\alpha_+$, then the local difference is $\frac{1}{2}$, regardless of where the head (ie. the dot in the Kauffman state) is. Similarly, if $x$ is marked $\beta_-$, then the local difference is $-\frac{1}{2}$, and if $x$ is marked $\alpha_-$ or $\beta_+$, then the local difference is $0$ (see Figures \ref{alex}, \ref{maslov}, and \ref{edgemark}). Thus the overall difference $A(s)-M(s)$ remains unchanged, and hence $w(D,\varepsilon)=w(D,\varepsilon^\prime)$. 
\end{proof}

Note that a marked edge is no longer required to define width. Thus width for an oriented knot projection will now be denoted $w(D)$.

The Kauffman state complex is a chain complex that generates knot Floer homology $\widehat{HFK}(K)$ (cf. \cite{alt}). Thus $\widehat{HFK}(K)$ inherits its bigrading from the construction described previously, and there is an analogous notion of width. Let
$$\Delta_K=\text{max}\{A(\xi)-M(\xi)|\xi \text{ is a generator of }\widehat{HFK}(K)\},$$
and 
$$\delta_K=\text{min}\{A(\xi)-M(\xi)|\xi \text{ is a generator of }\widehat{HFK}(K)\}.$$
\begin{definition}
The knot Floer width $w_{HF}(K)$ of a knot $K$ in $S^3$ is given by 
$$w_{HF}(K)=\Delta_K-\delta_K+1.$$
\end{definition}
This immediately implies
\begin{equation}
\label{homineq} w_{HF}(K)\leq\textrm{min}\{w(D)|D \textrm{ is a diagram for } K\}.
\end{equation}

A Kauffman state $s\in S(D,\varepsilon)$ is said to be on the {\it maximal diagonal} if $A(s)-M(s)=\Delta$ and on the {\it minimal diagonal} if $A(s)-M(s)=\delta$. Define a map $\eta:S\to\mathbb{Z}$ by  setting $\eta(s)$ equal to the difference between the number of $\alpha_+$ edges  and $\beta_-$ edges in $s$.  From the proof of Proposition \ref{widind}, the calculation of the local difference for each edge implies that 
$$A(s)-M(s)=\frac{1}{2}\eta(s).$$
Therefore $s$ is on the maximal diagonal if $\eta(s)$ is maximized and on the minimal diagonal if $\eta(s)$ is minimized. Moreover,
$$w(D)=\frac{1}{2}(\text{max}\{\eta(s)|s\in S\}-\text{min}\{\eta(s)|s\in S\})+1.$$

An edge $e$ in either of the checkerboard graphs $T_1$ or $T_2$ is said to be {\it positive} if it is marked either $\alpha_+$ or $\beta_+$; the edge $e$ is said to be {\it negative} if it is marked either $\beta_-$ or $\alpha_-$. If $e$ is contained in a cycle consisting of only positive edges, then $e$ is said to be in a {\it positive cycle}. If $e$ is contained in a cycle consisting of only negative edges, then $e$ is said to be in a {\it negative cycle}. 

The width of a diagram behaves predictably under a crossing change. Before this behavior can be described, a lemma is needed. 

\begin{lemma}
\label{cycle} Let $D$ be a diagram with marked edge $\varepsilon$ for the knot $K$, and let $e$ be an edge in either of the checkerboard graphs $T_1$ or $T_2$.
\begin{enumerate}
\item If $e$ is in a positive (negative) cycle, then there exists a state $s\in S(D,\varepsilon)$ on the maximal (minimal) diagonal that does not contain $e$.
\item If $e$ is a positive (negative) edge and is not in a positive (negative) cycle, then every state $s\in S(D,\varepsilon)$ on the maximal (minimal) diagonal must contain $e$. 
\end{enumerate}
\end{lemma}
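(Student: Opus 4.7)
The plan is to use the matroid exchange property for spanning trees together with a key additivity observation about $\eta$. From the proof of Proposition \ref{widind}, at each crossing the positive edge of the dual pair contributes exactly $1$ more to $\eta$ than the negative edge (whether the pair is $\{\alpha_+,\alpha_-\}$ or $\{\beta_+,\beta_-\}$): choosing $\alpha_+$ over $\alpha_-$ adds $1$, and choosing $\beta_+$ over $\beta_-$ also adds $1$. Hence $\eta(s)$ equals the number of positive edges appearing in the state $s$, up to a constant depending only on the diagram, so maximizing $\eta$ amounts to choosing the positive edge of the dual pair at as many crossings as possible (subject to the spanning-tree constraint), and minimizing $\eta$ to the opposite. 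I treat only the positive/maximal case; the negative/minimal version is verbatim dual.

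For part (1), let $e$ lie on a positive cycle $C$ in, say, $T_1$. Take any maximal-diagonal state $s = (t_1, t_2)$. If $e \notin t_1$, there is nothing to prove. Otherwise $e \in t_1 \cap C$; removing $e$ from $t_1$ splits it into two subtrees with vertex sets $A$ and $B$, and the path $C \setminus \{e\}$ from one endpoint of $e$ to the other must contain an edge $e'$ crossing from $A$ to $B$. This $e'$ lies in $C \setminus t_1$, and $t_1' = (t_1 \setminus \{e\}) \cup \{e'\}$ is again a spanning tree by matroid exchange. Both $e$ and $e'$ are positive (as $C$ is positive), so passing from $s$ to the induced state $s'$ swaps positive-to-negative at $e$'s crossing (losing $1$) and negative-to-positive at $e'$'s crossing (gaining $1$). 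Therefore $\eta(s') = \eta(s)$, placing $s'$ on the maximal diagonal, and by construction $e \notin s'$.

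For part (2), let $e$ be a positive edge of, say, $T_1$ that lies in no positive cycle, and suppose for contradiction that some maximal-diagonal state $s = (t_1, t_2)$ has $e \notin t_1$. The fundamental cycle $C$ of $e$ with respect to $t_1$ (the unique cycle in $t_1 \cup \{e\}$) contains $e$ and otherwise lies in $t_1$; by hypothesis $C$ is not all positive, so some negative edge $e'' \in C \cap t_1$ with $e'' \neq e$ exists. Standard exchange makes $t_1'' = (t_1 \setminus \{e''\}) \cup \{e\}$ a spanning tree, producing a state $s''$. At $e''$'s crossing we swap negative-to-positive and at $e$'s crossing we also swap negative-to-positive, each gaining $1$, so $\eta(s'') = \eta(s) + 2$, contradicting the maximality of $\eta(s)$.

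The main technical point is verifying that each spanning-tree exchange in $T_1$ lifts cleanly to a Kauffman state. This becomes transparent once one notes that by planar duality $t_2$ is determined by $t_1$ (namely $t_2 = \{f^* : f \in T_1 \setminus t_1\}$), so a single edge swap in $T_1$ merely toggles the dual-pair choice at its two affected crossings and leaves every other local contribution to $\eta$ untouched; this is what makes the two-crossing bookkeeping above exhaustive.
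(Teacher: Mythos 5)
Your argument is correct, and for part (1) it is genuinely cleaner than the paper's. The paper also reduces everything to $\eta$, but it performs the exchange in the opposite order: it first adds an edge $e'$ of the positive cycle $\gamma$ that is missing from $t_1$ and examines the resulting fundamental cycle $\tau$; when $e\notin\tau$ it must argue, using the maximality of $\eta(s)$, that the edge of $\tau$ outside $\gamma$ being discarded is positive, and then iterate this swap finitely many times until $e$ finally lies in the fundamental cycle and can be removed. Your version deletes $e$ first and uses the cut $(A,B)$ it determines to locate, on the path $C\setminus\{e\}$, a reconnecting edge $e'$ that is automatically outside $t_1$ and automatically positive; this collapses the paper's iteration into a single exchange. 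Your preliminary observation that $\eta(s)$ equals the number of positive edges of $s$ up to a constant depending only on the diagram (because the positive member of each dual pair contributes exactly one more to $\eta$ than the negative member, in both the $\alpha$ and the $\beta$ case) agrees with the paper's local computations and makes the two-crossing bookkeeping transparent, where the paper instead checks the $\alpha_{\pm}$ and $\beta_{\pm}$ cases by hand. Part (2) is essentially the same contradiction argument as in the paper. The one point that must be said explicitly---and you do say it---is that $t_2$ is forced to be the set of duals of the non-tree edges of $T_1$, so a single edge exchange in $T_1$ really does lift to a Kauffman state and affects only the two crossings involved.
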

\begin{proof}
Only the statements for positive edges are proved; the proofs for the negative edges are analogous. Without loss of generality, suppose that $e$ is a positive edge in $T_1$.\bigskip

\noindent \textbf{(1)} Suppose $e$ is in a positive cycle $\gamma$ and suppose all states on the maximal diagonal contain $e$. Let $s$ be a state on the maximal diagonal consisting of the two spanning trees $t_1\subset T_1$ and $t_2\subset T_2$. Since $t_1$ contains the edge $e$, there exists some other edge $e^\prime$ in $\gamma$ not contained in $t_1$. The graph obtained by adding the edge $e^\prime$ to $t_1$ contains a unique cycle $\tau$. 

Suppose the edge $e$ is contained in this unique cycle. Then form a new state $s^\prime$ consisting of two new spanning trees $t_1^\prime$ and $t_2^\prime$, where $t_1^\prime$ is the spanning tree obtained by adding $e^\prime$ and deleting $e$ in $t_1$, and $t_2^\prime$ is the spanning tree obtained by deleting the dual of $e^\prime$ and adding the dual of $e$ in $t_2$.

To show that $s^\prime$ is on the maximal diagonal, it is enough to show that $\eta(s)=\eta(s^\prime)$. Since $e$ and $e^\prime$ are in a positive cycle, both edges are positive, and the dual edges are negative. Deleting a positive edge from $t_1$ and adding its negative dual to $t_2$ results in a net decrease of $\eta(s)$ by one, since this corresponds to removing an $\alpha_+$ from $t_1$ edge and replacing it with an $\alpha_-$ edge in $t_2$ or removing a $\beta_+$ edge from $t_1$ and replacing it with a $\beta_-$ edge in $t_2$. Likewise, deleting a negative edge from $t_1$ and adding its positive dual to $t_2$ results in a net increase of $\eta(s)$ by one. To construct $s^\prime$, first a positive edge is removed from $t_1$ and its negative dual is inserted into $t_2$. Then a negative edge is removed from $t_2$ and its positive dual is inserted into $t_1$. Thus $\eta(s)=\eta(s^\prime)$, and $s^\prime$, a state not containing the edge $e$, is on the maximal diagonal.

Now suppose the edge $e$ is not contained the cycle $\tau$. Thus $\tau\neq\gamma$, and there is some edge $e^{\prime\prime}$ in $\tau$ not contained in $\gamma$. Construct a new state $s^{\prime\prime}$ by deleting $e^{\prime\prime}$ from $t_1$, replacing it with its dual in $t_2$, inserting $e^\prime$ into $t_1$, and deleting its dual from $t_2$. Notice that if $e^{\prime\prime}$ is a negative edge, then two negative edges were deleted and two positive edges were inserted in the construction of $s^{\prime\prime}$. Thus $\eta(s^{\prime\prime})=\eta(s)+2$, contradicting the fact that $s$ is on the maximal diagonal. Hence $e^{\prime\prime}$ must be a positive edge, and the construction of $s^{\prime\prime}$ simultaneously exchanges a positive for a negative edge and a negative for a positive edge. Therefore $\eta(s^{\prime\prime})=\eta(s)$, and $s^{\prime\prime}$ is again on the maximal diagonal.

Iterate this process as follows: continue by choosing a new edge in $\gamma$ not in $s^{\prime\prime}$ (and thus this edge is also not in $s$). Adding this new edge to $s^{\prime\prime}$ forms a unique cycle. If $e$ is contained in this unique cycle, the process ends as described above. If $e$ is not contained in this unique cycle, then some edge not in $\gamma$ can be removed, resulting in a state still on the maximal diagonal. Since $\gamma$ contains only a finite number of edges, in a finite number of steps, the edge $e$ must be contained in the unique cycle. Therefore there is a state on the maximal diagonal not containing the edge $e$.\bigskip

\noindent\textbf{(2)} Suppose $e$ is a positive edge and is not in a positive cycle. Also, suppose that there exists a state $s$ on the maximal diagonal, consisting of spanning trees $t_1\subset T_1$ and $t_2\subset T_2$, not containing the edge $e$. Then consider the subgraph of $T_1$ obtained by adding the edge $e$. There is a unique cycle in this subgraph, and since $e$ is not in a positive cycle, this cycle contains a negative edge $e^\prime$. Let $s^\prime$ be the state obtained by deleting $e^\prime$ and adding $e$ in $t_1$ and adding the dual of $e^\prime$ and deleting the dual of $e$ in $t_2$. Both of these switches adds a positive edge and deletes a negative edge. Thus $\eta(s^\prime)=\eta(s)+2$ and this contradicts the fact that $s$ is on the maximal diagonal. Hence all states on the maximal diagonal must contain $e$.
\end{proof}\bigskip

With the previous lemma established, the behavior of the width of a diagram under a crossing change can now be determined. Let $D$ be a diagram for the knot $K$ with marked edge $\varepsilon$, and let $D^\prime$ be the diagram obtained from $D$ by a single crossing change. Let $T_1$ and $T_2$ be the checkerboard graphs for $D$ and $T_1^\prime$ and $T_2^\prime$ be the checkerboard graphs for $D^\prime$. The crossing in $D$ has an associated positive edge $e_+$ and an associated negative edge $e_-$ in the checkerboard graphs. These two edges are dual to each other. Moreover, the crossing change switches $e_+$ to a negative edge and $e_-$ to a positive edge.

\begin{theorem}
\label{widthm} Let $D$ be a diagram of a knot $K$ and $D^\prime$ be the diagram obtained from $D$ by a single crossing change. Suppose $e_+$ (the positive edge) and $e_-$ (the negative edge) are the edges in the checkerboard graphs $T_1$ and $T_2$ of $D$ associated to the crossing that is change. Then the width of a diagram under a crossing change behaves as follows:
\begin{enumerate}
\item $|w(D)-w(D^\prime)|\leq 1$.
\item If $e_+$ is in a positive cycle and $e_-$ is in a negative cycle, then $w(D^\prime)=w(D)+1$.
\item If $e_+$ is in a positive cycle and $e_-$ is not in any negative cycle, then
$w(D^\prime)=w(D)$.
\item If $e_+$ is not in any positive cycle and $e_-$ is in a negative cycle, then 
$w(D^\prime)=w(D)$. 
\item If $e_+$ is not in any positive cycle and $e_-$ is not in any negative cycle, then $w(D^\prime)=w(D)-1$.
\end{enumerate}
\end{theorem}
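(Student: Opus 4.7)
The plan is to reformulate the problem in terms of $\eta$, using the identity $A(s) - M(s) = \tfrac{1}{2}\eta(s)$ established earlier, so that $w(D) = \tfrac{1}{2}(\max_{s}\eta(s) - \min_{s}\eta(s)) + 1$, and likewise for $D'$. Since $e_+$ and $e_-$ are dual, every Kauffman state contains exactly one of them; this partitions $S(D,\varepsilon) = S_+ \sqcup S_-$ according to which is used, and analogously for $S(D',\varepsilon)$. The crossing change leaves the underlying checkerboard graphs unchanged but flips the signs of the labels on $e_+$ and $e_-$. A local check parallel to the one in the proof of Lemma \ref{cycle} shows that under the natural bijection $S \leftrightarrow S'$ one has $\eta'(s) = \eta(s) - 1$ for $s \in S_+$ and $\eta'(s) = \eta(s) + 1$ for $s \in S_-$, where $\eta'$ is the corresponding function for $D'$.

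With this set-up, part (1) is immediate: $\max \eta' \leq \max \eta + 1$ and $\min \eta' \geq \min \eta - 1$, giving $w(D') \leq w(D) + 1$, and the reverse inequality by symmetry of the crossing change. For parts (2)--(5), I want to know how the extrema of $\eta$ distribute between $S_+$ and $S_-$, and this is exactly what Lemma \ref{cycle} controls, applied to $e_+$ on the maximal side and to $e_-$ on the minimal side: the maximum of $\eta$ is attained in $S_-$ (resp.\ in $S_+$) if $e_+$ lies (resp.\ does not lie) in a positive cycle, and the dual statement holds for the minimum via $e_-$ and negative cycles.

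A crucial auxiliary step is the parity observation that all values of $\eta$ on $S(D,\varepsilon)$ have the same parity. This follows because any two Kauffman states are connected by a sequence of elementary edge-swaps of the type used in the proof of Lemma \ref{cycle} (exchange one edge of $t_1$ for another, together with the corresponding dual swap in $t_2$), and a short case check over the four label types $\alpha_\pm, \beta_\pm$ shows that each such swap changes $\eta$ by an even integer. Consequently, whenever Lemma \ref{cycle}(2) forces $\max \eta$ to strictly exceed every value of $\eta$ on $S_-$, the gap must in fact be at least $2$, so $\max \eta' = \max \eta - 1$ rather than merely $\max \eta' \le \max \eta$; the analogous assertion holds on the minimal side.

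Assembling these ingredients, each of the two conditions (on $e_+$ vis-\`a-vis positive cycles and on $e_-$ vis-\`a-vis negative cycles) independently determines whether $\max \eta'$ is $\max \eta + 1$ or $\max \eta - 1$, and whether $\min \eta'$ is $\min \eta + 1$ or $\min \eta - 1$; cases (2)--(5) then follow by the four pairings, since in (2) the shifts are opposite (width $+1$), in (3) and (4) the shifts coincide (width unchanged), and in (5) they are again opposite (width $-1$). The principal obstacle I expect is establishing the parity invariance cleanly; without it, the ``not in a cycle'' cases yield only the weak bound from part (1), not the sharp equalities needed for (4) and (5).
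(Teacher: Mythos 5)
Your proposal follows essentially the same route as the paper's proof: analyze the local effect of the crossing change on $\eta$, and use Lemma \ref{cycle} to decide whether the states realizing $\max\eta$ and $\min\eta$ contain $e_+$ or $e_-$. One local claim is false as stated, though harmlessly so. Since $\eta$ is the number of $\alpha_+$ edges minus the number of $\beta_-$ edges and the crossing change exchanges $\alpha_+\leftrightarrow\beta_-$ and $\alpha_-\leftrightarrow\beta_+$, $\eta$ changes by $0$ or $\pm2$ on each state, never by $\pm1$; the correct shifts on $(S_+,S_-)$ (states containing $e_+$, respectively $e_-$) are $(-2,0)$ when $e_+$ is of type $\alpha_+$ and $(0,+2)$ when $e_+$ is of type $\beta_+$ --- this is exactly the case division the paper carries out. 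Your claimed shift $(-1,+1)$ differs from either correct alternative by a global additive constant, so every difference $\max\eta'-\min\eta'$, and hence the width, comes out the same and the rest of your argument survives; but $\Delta$ and $\delta$ individually would be computed incorrectly, so this step should be fixed. On the other hand, your explicit parity observation (all values of $\eta$ on $S(D,\varepsilon)$ are congruent modulo $2$, proved via spanning-tree edge exchanges together with the fact that dual edges satisfy $\alpha_+\leftrightarrow\alpha_-$ and $\beta_+\leftrightarrow\beta_-$) is a genuine and worthwhile addition: the paper's treatment of cases (3)--(5) tacitly uses precisely this integral separation when it asserts, for instance, that $\Delta$ drops by exactly one once every maximal state contains $e_+$, and your lemma is the justification that the strict inequality $\max_{S_-}\eta<\max\eta$ actually forces a gap of at least $2$.
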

\begin{proof}
\noindent \textbf{(1)} Let $s=(t_1,t_2)$ be a Kauffman state for $D$. The edges $e_+$ and $e_-$ are dual in the checkerboard graphs. Thus exactly one of them is an edge in either $t_1$ or $t_2$. The crossing change corresponds to changing this edge and no others in $t_1$ or $t_2$. Label the new Kauffman state for $D^\prime$ by $s^\prime=(t_1^\prime,t_2^\prime)$.

Suppose $e_+$ is marked $\alpha_+$. Then $e_-$ is marked $\alpha_-$, and after the crossing change, $e_+$ switches to a $\beta_-$ edge and $e_-$ switches to a $\beta_+$ edge (see Figure \ref{edgemark}). Thus if $e_+$ is in either $t_1$ or $t_2$, it follows that $\eta(s)=\eta(s^\prime)+2$ and $A(s)-M(s)=A(s^\prime)-M(s^\prime)+1$. If $e_+$ is not in either $t_1$ or $t_2$, then $e_-$ must be in either $t_1$ or $t_2$. Then $\eta(s)=\eta(s^\prime)$ and $A(s)-M(s)=A(s^\prime)-M(s^\prime)$. This implies that $\Delta$ and $\delta$ either decrease by one or remain the same. The case where $e_+$ is marked $\beta_+$ is analogous. Therefore, $|w(D)-w(D^\prime)|\leq 1$.\bigskip

\noindent \textbf{(2)} Suppose $e_+$ is in a positive cycle and $e_-$ is in a negative cycle. Then by Lemma \ref{cycle}, there are states $s_{\textrm{max}}$ and $s_{\textrm{min}}$ in $S(D,\varepsilon)$ such that $s_{\textrm{max}}$ is on the maximal diagonal and does not contain $e_+$ and $s_{\textrm{min}}$ is on the minimal diagonal and does not contain $e_-$. Since $e_+$ and $e_-$ are dual and $s_{\textrm{max}}$ does not contain $e_+$, it follows that $s_{\textrm{max}}$ contains $e_-$. Similarly, $s_{\textrm{min}}$ contains $e_+$. Let $s_{\textrm{max}}^\prime$ and $s_{\textrm{min}}^\prime$ be the states after the crossing change with the same edges as $s_{\textrm{max}}$ and $s_{\textrm{min}}$ respectively. 

If $e_+$ is marked $\alpha_+$, then $e_-$ is marked $\alpha_-$. After the crossing change, $e_+$ is switched to a $\beta_-$ edge, and $e_-$ is switched to a $\beta_+$ edge. It follows that $\eta(s_{\textrm{max}}^\prime)=\eta(s_{\textrm{max}})+2$ and $\eta(s_{\textrm{min}}^\prime)=\eta(s_{\textrm{min}})$. In this case, the crossing change induces an increase in $\Delta$ by one and no change in $\delta$. Similarly, if $e_+$ is marked $\beta_+$ and $e_-$ is marked $\beta_-$, then the crossing change induces no change in $\Delta$ and a decrease in $\delta$ by one. Therefore, $w(D^\prime)=w(D)+1$.\bigskip

\noindent \textbf{(3)} Suppose $e_+$ is in a positive cycle and $e_-$ is not in any negative cycle. As before, there is a state $s_{\textrm{max}}$ in $S(D,\varepsilon)$ on the maximal diagonal not containing $e_+$. Now, however, every state on the minimal diagonal must contain the edge $e_-$. Hence $s_{\textrm{max}}$, as well as every state on the minimal diagonal contains the edge $e_-$. So, if $e_+$ is marked $\beta_+$ and $e_-$ is marked $\beta_-$, then both $\Delta$ and $\delta$ are increased by one under a crossing change. If $e_+$ is marked $\alpha_+$ and $e_-$ is marked $\alpha_-$, then the crossing change does not alter $\eta(s_{\textrm{max}})$ or $\eta(s)$ for $s$ any state on the minimal diagonal. If another state $s_{\textrm{max}}^\prime$ on the maximal diagonal contains the edge $e_+$, then the crossing change decreases $\eta(s_{\textrm{max}}^\prime)$ by two. Therefore $\Delta$ and $\delta$ are unchanged. Thus $w(D^\prime)=w(D)$.\bigskip

\noindent \textbf{(4)} Suppose $e_+$ is not in any positive cycle and $e_-$ is in a negative cycle. This case is completely analogous to the case above. If $e_+$ is marked $\beta_+$ and $e_-$ is marked $\beta_-$, then both $\Delta$ and $\delta$ remained unchanged, and if $e_+$ is marked $\alpha_+$ and $e_-$ is marked $\alpha_-$, then both $\Delta$ and $\delta$ are decreased by one. Therefore, $w(D^\prime)=w(D)$.\bigskip

\noindent \textbf{(5)} Suppose $e_+$ is not in any positive cycle and that $e_-$ is not in any negative cycle. Then all states on the maximal diagonal contain $e_+$ and all states on the minimal diagonal contain $e_-$. If $e_+$ is marked $\beta_+$ and $e_-$ is marked $\beta_-$, then $\Delta$ remains unchanged and $\delta$ is increased by one. If $e_+$ is marked $\alpha_+$ and $e_-$ is marked $\alpha_-$, then $\Delta$ is decreased by one and $\delta$ remains unchanged. Thus $w(D^\prime)=w(D)-1$.
\end{proof}

\section{Ribbon Graphs and Turaev genus}
\label{ribsec}

The ideas discussed below involve ribbon graphs associated to a knot diagram. These ideas are developed in a paper by Dasbach, Futer, Kalfagianni, Lin, and Stoltzfus (cf. \cite{das}).

\subsection{Ribbon Graphs}
A {\it connected oriented ribbon graph} $\mathbb{D}$ is a connected graph embedded on an oriented surface such that each face of the graph is homeomorphic to a disk. Informally, we think of a connected oriented ribbon graph as a graph together with a cyclic ordering of the edges around each vertex. The surface on which the graph embeds is then the smallest genus oriented surface in which the graph can be embedded while preserving the cyclic ordering of the edges around each vertex. The {\it genus} $g(\mathbb{D})$ of a connected oriented ribbon graph $\mathbb{D}$ is the genus of the surface on which the graph is embedded and is determined by its Euler characteristic. Note that all ribbon graphs in this paper are assumed to be connected and oriented and are referred to only as ribbon graphs.

For each planar knot diagram, there are two associated ribbon graphs $\mathbb{D}(A)$ and $\mathbb{D}(B)$. Let $D$ be a diagram for a knot $K\subset S^3$. For each crossing in $D$, there is an $A$-splicing and a $B$-splicing,  and each time a crossing is replaced with a splicing, an edge is inserted as shown in Figure \ref{splicing}.

\begin{figure}[h]
\includegraphics[scale=.5]{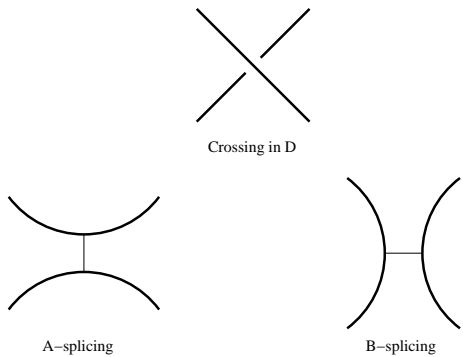}
\caption{The splicings of a crossing} \label{splicing}
\end{figure}

One ribbon graph $\mathbb{D}(A)$ is associated to choosing the $A$-splicing for each crossing, and the other $\mathbb{D}(B)$ is associated to choosing the $B$-splicing for each crossing. The construction of $\mathbb{D}(A)$ is described here; the construction of $\mathbb{D}(B)$ is analogous. First, take a checkerboard coloring of $\Gamma$ as described above. Then draw a circle corresponding to each of the black faces. Connect the circles just as their corresponding faces are connected in $D$ (see the first picture of Figure \ref{ribalg}). Next, replace each crossing by an $A$-splicing (see Figure \ref{splicing}). This results in a collection of circles in the plane together with line segments joining them. Choose an orientation for each circle as follows. Orient the circle counterclockwise if it is inside an even number of circles, and orient the circle clockwise if it is inside an odd number of circles (see the second picture of Figure \ref{ribalg}).

The ribbon graph $\mathbb{D}(A)$ is obtained by ``contracting each circle to a point" as follows: the vertices of $\mathbb{D}(A)$ are in one-to-one correspondence with the circles. Two vertices of $\mathbb{D}(A)$ are connected by an edge if their associated circles have a line segment between them. The cyclic orientation of the edges meeting at a vertex of $\mathbb{D}(A)$ is determined by first fixing a cyclic orientation of the plane, say counterclockwise. Then the edges meeting at any vertex of $\mathbb{D}(A)$ are cyclically ordered in the counterclockwise direction according to the cyclic order given by the orientation of the corresponding circle. Figure \ref{ribalg} describes each step of this construction. Note to construct $\mathbb{D}(B)$, start with circles corresponding to the white faces of $\Gamma$, and at each crossing choose a $B$-splicing instead of an $A$-splicing. Otherwise proceed as above.

In the construction of the two ribbon graphs $\mathbb{D}(A)$ and $\mathbb{D}(B)$, a choice of black and white checkerboard graphs is made. The construction does not depend on this choice. Regardless of whether circles corresponding to the white or black graph are chosen, the circles coming from choosing an $A$-splicing at each crossing (or choosing a $B$-splicing at each crossing) are the same. 

\begin{figure}[h]
\includegraphics[scale=.5]{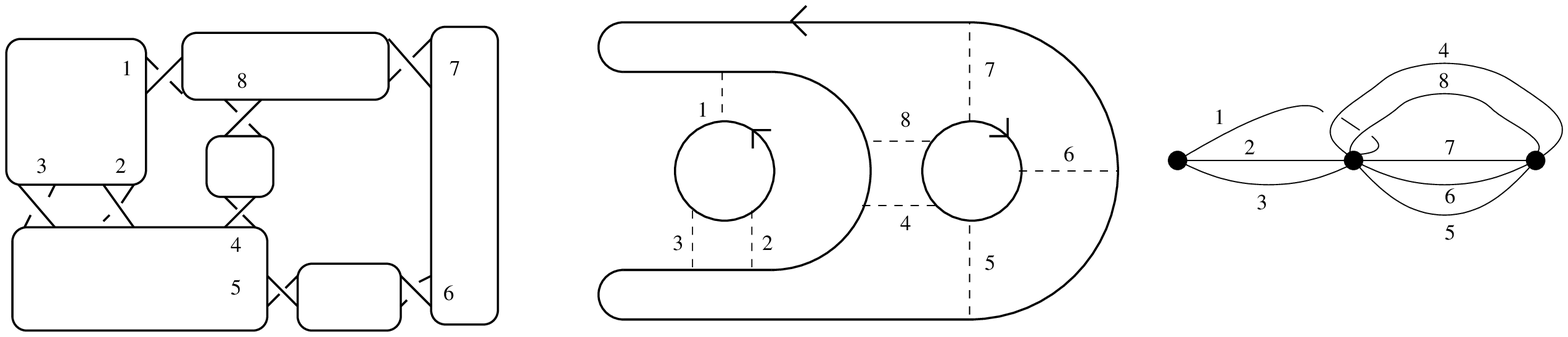}
\caption{On the left is the black faces of $8_{19}$ represented as circles with the crossing information from $D$. The picture in the middle is the result of choosing an $A$-splicing at each crossing. On the right is the ribbon graph $\mathbb{D}(A)$ for $8_{19}$.} \label{ribalg}
\end{figure}

\subsection{Turaev Surface}

The ribbon graph $\mathbb{D}(A)$ is embedded on a surface as follows. Let $D$ be the diagram and $\Gamma$ the plane graph associated to $D$. Regard $\Gamma$ as embedded in $\mathbb{R}^2$ sitting inside $\mathbb{R}^3$. Outside the neighborhoods around the vertices of $\Gamma$ is a collection of arcs in the plane. Replace each arc by a band that is perpendicular to the plane. In the neighborhoods of the vertices, place a saddle so that the circles obtained from choosing an $A$-splicing at each crossing lie above the plane and so that the circles obtained from choosing a $B$-splicing at each crossing lie below the plane (see Figure \ref{saddle}). This results in a surface with boundary a collection of disjoint circles, with circles corresponding to the $A$-splicing above the plane and circles corresponding to the $B$-splicing below the plane. For each boundary circle, insert a disk, to obtain a closed surface $G(D)$ known as the {\it Turaev surface} (cf. \cite{turvpaper}).

\begin{figure}[h]
\includegraphics[scale=.4]{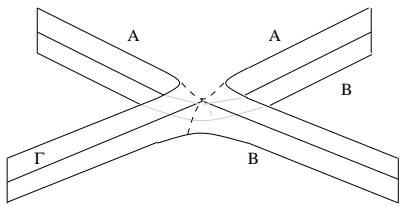}
\caption{In a neighborhood of each vertex of $\Gamma$ a saddle surface transitions between the $A$ and $B$ circles.} \label{saddle}
\end{figure}

The ribbon graph $\mathbb{D}(A)$ is embedded on $G(D)$ as follows. Each vertex of $\mathbb{D}(A)$ is the center of the disk used to cap off a boundary circle lying above the plane. Edges are then gradient lines from the vertices through the saddle points (which  correspond to crossings). Notice that the ribbon graph $\mathbb{D}(B)$ can also be embedded in this surface by embedding its vertices in the center of disks used to cap off circles below the plane. Edges of $\mathbb{D}(B)$ are also gradient lines from the vertices to the saddle points.

The embeddings of $\mathbb{D}(A)$ and $\mathbb{D}(B)$ on $G(D)$ are especially nice. Each face of both $\mathbb{D}(A)$ and $\mathbb{D}(B)$ on $G(D)$ is homeomorphic to a disk. Moreover, $\mathbb{D}(A)$ and $\mathbb{D}(B)$ are dual on $G(D)$ (cf. \cite{das}). Note that since each face of $\mathbb{D}(A)$ and $\mathbb{D}(B)$ is a disk, it follows that the genera of $\mathbb{D}(A),\mathbb{D}(B),$ and $G(D)$ agree. This leads to the definition of the topological invariant discussed in the introduction.

\begin{definition}
Let $K\subset S^3$ be a knot. The {\it Turaev genus} $g_T(K)$ of $K$ is defined by
$$g_T(K)=\textrm{min}\{g(G(D))|D\textrm{ is a diagram of }K\}.$$
\end{definition}

Turaev genus is an obstruction to a knot being alternating. In fact, a knot $K$ is alternating if and only if $g_T(K)=0$ (cf. \cite{das}).

\subsection{Computing the genus of the Turaev surface}
\label{alg}

The genus of $G(D)$ is determined by the Euler characteristic of $\mathbb{D}(A)$ (thought of as a cellular decomposition of $G(D)$). Thus to compute the genus of the Turaev surface, it suffices to compute the number of vertices $V$, the number of edges $E$, and the number of faces $F$ of $\mathbb{D}(A)$. The number of edges $E$ is equal to the number of crossings in the diagram. Since $\mathbb{D}(A)$ and $\mathbb{D}(B)$ are dual to one another on $G(D)$, the number of faces $F$ of $\mathbb{D}(A)$ is equal to the number of vertices of $\mathbb{D}(B)$. 

In order to compute $V$, it suffices to count the number of circles after choosing the $A$-splicing at each crossing in the constructions of $\mathbb{D}(A)$. The vertices of $T_1$ correspond to the circles coming from the black checkerboard coloring of $D$. These circles, along with the crossings, determine the knot diagram, and are the starting point for the construction of $\mathbb{D}(A)$ (see Figure \ref{ribalg}). The following is an algorithm to count the vertices of $\mathbb{D}(A)$ by counting the circles after choosing the $A$-splicing for each crossing. The algorithm is given by performing a sequence of operations on the checkerboard graph $T_1$. 

\begin{figure}[h]
\includegraphics[scale=.4]{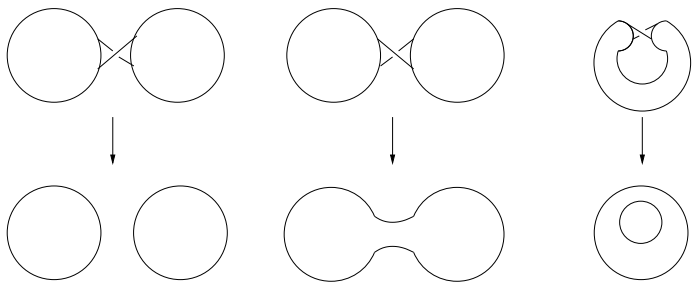}
\caption{On the left is the picture corresponding to deleting a negative edge. The middle is the picture corresponding to contracting a non-loop positive edge. On the right is the picture corresponding choosing the $A$-splicing for a positive loop.} \label{contract}
\end{figure}

{\it Step 1: Remove all negative edges from $T_1$.} If two vertices of $T_1$ are the endpoints of a negative edge, then their corresponding circles are separated by an $A$-splicing (see Figure \ref{contract}). Thus choosing an $A$-splicing for that crossing does not change the number of circles, and so each negative edge in $T_1$ can be removed.

{\it Step 2: Contract all non-loop positive edges.} If in the resulting graph there exists a positive edge whose endpoints are distinct vertices, then the circles corresponding to these vertices are joined by an $A$-splicing (see the second picture of Figure \ref{contract}). Thus choosing an $A$-splicing for that crossing decreases the number circles by one; likewise, contracting the edge decreases the number of vertices by one. Either the resulting graph contains a non-loop positive edge or all remaining edges are loops. If the graph contains a non-loop positive edge, then repeat this step. Otherwise, the resulting graph is a collection of vertices and loops, and is called {\it the bouquet of $T_1$}.

{\it Step 3: Count vertices and loops.} Each vertex in the bouquet of $T_1$ corresponds to a circle in the construction of $\mathbb{D}(A)$, and each loop in the bouquet of $T_1$ corresponds to a crossing between a circle and itself. Choosing an $A$-splicing at a crossing between a circle and itself splits that circle into two circles (see Figure \ref{contract}). Therefore, each loop also corresponds to a circle in the construction of $\mathbb{D}(A)$. Hence, $V$ is the number of vertices plus the number of loops in the bouquet of $T_1$.

In order to calculate $F$, the algorithm is modified as follows. The sequence of operations is performed on the checkerboard graph $T_2$. Since a $B$-splicing is chosen at each crossing in the construction of $\mathbb{D}(B)$, 
in Step 1, positive edges are deleted. Also, non-loop negative edges are contracted in Step 2. Then $F$ is equal to the number of vertices plus the number of loops in {\it the bouquet of $T_2$}. This process is shown in Figure \ref{vcount}.\\
\begin{figure}[h]
\includegraphics[scale=0.4]{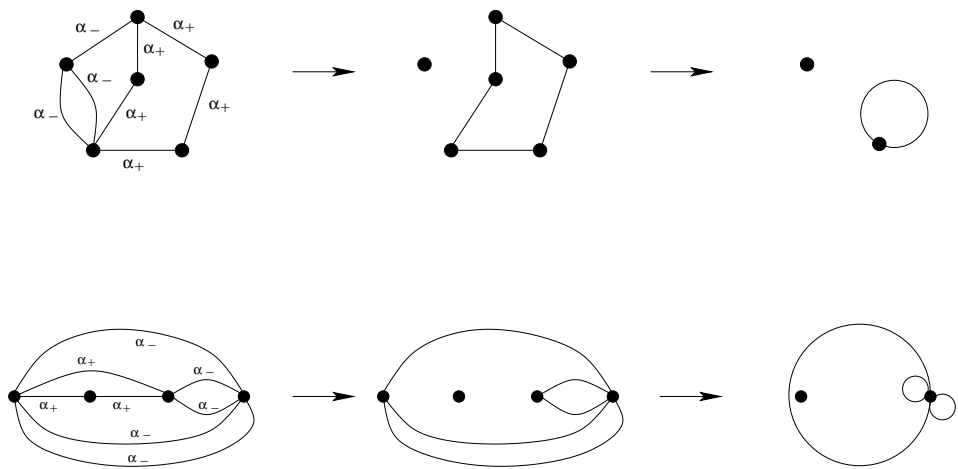}
\caption{The procedure on $T_1$ (top) and $T_2$ (bottom) gives $V=3$ and $F=5$.} \label{vcount}
\end{figure}
This algorithm immediately implies the following theorem.

\begin{theorem}
\label{turgen} Let $D$ be a diagram for a knot $K\subset S^3$, and let $G(D)$ be the Turaev surface of $D$. Let $T_1$ and $T_2$ be the checkerboard graphs of $D$. Let $V$ be the number of vertices and loops in the bouquet of $T_1$, $E$ be the number of edges in $T_1$ (or $T_2$), and $F$ be the number of vertices and loops in the bouquet of $T_2$. Then
$$2-2g(G(D))=V-E+F.$$
\begin{flushright}
$\Box$
\end{flushright}
\end{theorem}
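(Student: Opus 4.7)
The plan is to read off the Euler characteristic of $G(D)$ directly from the ribbon graph $\mathbb{D}(A)$, since the preceding discussion has shown that $\mathbb{D}(A)$ embeds on $G(D)$ as a CW decomposition (every face is a disk). Because $G(D)$ is closed and oriented, $\chi(G(D)) = 2 - 2g(G(D))$, so it suffices to identify the number of vertices, edges, and faces of $\mathbb{D}(A)$ with the quantities $V$, $E$, $F$ defined in the statement.

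First I would handle the edges: by the construction of $\mathbb{D}(A)$, each edge corresponds to a crossing of $D$, which in turn corresponds to an edge of either checkerboard graph. Hence the edge count of $\mathbb{D}(A)$ equals the number of edges of $T_1$ (equivalently $T_2$), which is $E$. Next, the vertices of $\mathbb{D}(A)$ are in bijection with the circles produced by $A$-splicing every crossing. The three-step algorithm described just before the theorem (delete negative edges, contract non-loop positive edges, count vertices and loops of the resulting bouquet of $T_1$) was justified splicing-by-splicing: deleting a negative edge does not change the circle count, contracting a non-loop positive edge merges two circles into one while simultaneously merging two vertices into one, and a positive loop in the bouquet splits a single circle into two, contributing an extra circle beyond its base vertex. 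Summed over the reduction, the number of circles equals the number of vertices plus the number of loops in the bouquet of $T_1$, which is $V$.

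For the face count, I would invoke the fact stated earlier that $\mathbb{D}(A)$ and $\mathbb{D}(B)$ are dual on $G(D)$. Duality gives a bijection between faces of $\mathbb{D}(A)$ and vertices of $\mathbb{D}(B)$, and the vertices of $\mathbb{D}(B)$ are in bijection with the circles produced by $B$-splicing every crossing. Running the analogous algorithm on $T_2$ (deleting positive edges, contracting non-loop negative edges) by the symmetric argument counts these $B$-circles as the number of vertices plus loops in the bouquet of $T_2$, namely $F$.

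Putting the three identifications together,
$$\chi(G(D)) = V - E + F,$$
and combining with $\chi(G(D)) = 2 - 2g(G(D))$ yields the claimed formula. There is no real obstacle here beyond organizing the bookkeeping for the algorithm; the only substantive input is that $\mathbb{D}(A)$ gives a cellular decomposition of $G(D)$ and that $\mathbb{D}(A), \mathbb{D}(B)$ are dual on $G(D)$, both of which are quoted from the earlier discussion.
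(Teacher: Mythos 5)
Your proposal is correct and follows essentially the same route as the paper: the theorem is stated there as an immediate consequence of the counting algorithm in Section \ref{alg}, which identifies the edges of $\mathbb{D}(A)$ with crossings, the vertices with the $A$-circles counted via the bouquet of $T_1$, and the faces with the vertices of $\mathbb{D}(B)$ (counted via the bouquet of $T_2$) using the duality of $\mathbb{D}(A)$ and $\mathbb{D}(B)$ on $G(D)$. Your write-up makes explicit the same two inputs the paper relies on (cellularity of the embedding and duality), so there is nothing to add.
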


Since the constructions of $\mathbb{D}(A)$ and $\mathbb{D}(B)$ do not depend on which checkerboard graph is chosen, it follows that this algorithm does not depend on the checkerboard graph chosen. Thus $T_1$ and $T_2$ may be relabeled at our convenience.

We next investigate the behavior of the genus of the Turaev surface under a crossing change in light of the algorithm given in the proof of Theorem \ref{turgen}. Let $D$ be a diagram for the knot $K$, and let $D^\prime$ be the diagram obtained from $D$ by a single crossing change. Let $T_1$ and $T_2$ be the checkerboard graphs for $D$. Let $G(D)$ and $G(D^\prime)$ be the two Turaev surfaces. Suppose that $e_+$ and $e_-$ are the edges in the checkerboard graphs that are associated to the crossing that is changed. Assume that $e_+$ is a positive edge. Since $e_-$ is dual to $e_+$, it follows that $e_-$ is negative (see Figure \ref{edgemark}). The crossing change causes $e_+$ to switch to a negative edge and $e_-$ to switch to a positive edge. 

\begin{theorem}
\label{genthm} Let $D$ be a diagram of a knot $K$ and $D^\prime$ be the diagram obtained from $D$ by a single crossing change. Suppose $e_+$ (the positive edge) and $e_-$ (the negative edge) are the edges in the checkerboard graphs $T_1$ and $T_2$ of $D$ associated to the crossing that is changed. Then the genus of the Turaev surface under a crossing change behaves as follows:
\begin{enumerate}
\item If $e_+$ is in a positive cycle and $e_-$ is in a negative cycle, then $g(G(D^\prime))=g(G(D))+1$.
\item If $e_+$ is in a positive cycle and $e_-$ is not in any negative cycle, then $g(G(D^\prime))=g(G(D))$.
\item If $e_+$ is not in any positive cycle and $e_-$ is in a negative cycle, then $g(G(D^\prime))=g(G(D))$.
\item If $e_+$ is not in any positive cycle and $e_-$ is not in any negative cycle, then $g(G(D^\prime))=g(G(D))-1$.
\end{enumerate}
\end{theorem}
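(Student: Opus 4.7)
\emph{Proof proposal.} The plan is to apply the algorithm behind Theorem \ref{turgen} and track how $V$, $E$, and $F$ change under the crossing change. Since $E$ is just the number of crossings of $D$, it is invariant, so the whole computation reduces to tracking $V$ and $F$.

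First I would observe that a crossing change does not alter the underlying graphs $T_1$ or $T_2$ as abstract graphs; only the signs of the two dual edges at the changed crossing flip, so $e_+$ becomes a negative edge of $T_1'$ and $e_-$ becomes a positive edge of $T_2'$. Next I would reformulate the bouquet algorithm. Let $T_1^+$ be the spanning subgraph of $T_1$ consisting of the positive edges. Deleting negative edges from $T_1$ leaves $T_1^+$, and contracting all non-loop edges of $T_1^+$ yields a bouquet with one vertex per connected component and one loop per independent cycle. With $n = |V(T_1)|$, $m = |E(T_1^+)|$, and $c = c(T_1^+)$, this gives
$$V = c + \bigl(m - (n - c)\bigr) = 2c + m - n,$$
and analogously $F = 2c(T_2^-) + |E(T_2^-)| - |V(T_2)|$, where $T_2^-$ is the spanning subgraph of negative edges of $T_2$.

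The crossing change removes precisely the edge $e_+$ from $T_1^+$ (with the vertex set unchanged) and precisely $e_-$ from $T_2^-$. Deleting one edge from a graph decreases the edge count by one, and it increases the component count by one exactly when the deleted edge is a bridge, i.e.\ lies on no cycle. A cycle in $T_1^+$ is by definition a positive cycle (and similarly for $T_2^-$), so the formula for $V$ yields
$$V' - V = \begin{cases} -1 & \text{if } e_+ \text{ lies on a positive cycle,} \\ +1 & \text{otherwise,} \end{cases}$$
and the analogous dichotomy holds for $F' - F$ depending on whether $e_-$ lies on a negative cycle.

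To conclude, I would invoke $2 - 2g(G(D)) = V - E + F$ from Theorem \ref{turgen} together with $E' = E$, which gives $g(G(D')) - g(G(D)) = -\tfrac{1}{2}\bigl((V'-V) + (F'-F)\bigr)$; plugging in the four sign combinations reads off cases (1)--(4) exactly. The only nontrivial step is the vertex-and-component reformulation of the bouquet algorithm in paragraph two; after that the proof is a bridge-versus-cycle dichotomy followed by Euler-characteristic bookkeeping.
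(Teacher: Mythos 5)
Your proposal is correct and follows essentially the same route as the paper: both arguments fix $E$, run the bouquet algorithm on $T_1$ and $T_2$, observe that the crossing change amounts to deleting $e_+$ from the positive part of $T_1$ and $e_-$ from the negative part of $T_2$, and read off the four cases from the Euler characteristic formula of Theorem \ref{turgen}. Your explicit formula $V = 2c + m - n$ and the bridge-versus-cycle dichotomy simply make precise the paper's statement that the deletion either removes a loop from the bouquet (edge on a cycle) or adds a vertex (edge a bridge).
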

\begin{proof}

If $e_+$ is not an edge in $T_1$, then relabel $T_1$ and $T_2$ so that it is. In order to compute $V$, the algorithm of Theorem \ref{turgen} states that all negative edges are removed from $T_1$. Since a crossing change switches $e_+$ to a negative edge, after the crossing change this edge will be deleted. If $e_+$ is in a positive cycle, then this decreases the number of loops in the bouquet of $T_1$ by one, and thus $V$ decreases by one. If $e_+$ is not in any positive cycle, then this increases the number of vertices in the bouquet of $T_1$ by one, and thus $V$ increases by one. 

Similarly, in order to compute $F$ all positive edges are deleted from $T_2$, and after the crossing change, the edge corresponding to $e_-$ will be deleted. If $e_-$ is in a negative cycle, then this decreases the number of loops in the bouquet of $T_2$ by one, and thus $F$ decreases by one. If $e_-$ is not in any negative cycle, this increases the number of vertices in the bouquet of $T_2$ by one, and thus $F$ increases by one.

The number of edges $E$ is equal to the number of crossings in the diagram, which remains the same under a crossing change. These conditions determine the behavior of the Euler characteristic, and thus the genus of $G(D)$ under a crossing change. 
\end{proof}

\section{Knot Floer width and Turaev genus}
\label{mainsec}

In the previous sections, we showed that the behavior of both the width of a diagram and the genus of the Turaev surface under a crossing change mimic each other.

\begin{theorem}
\label{width} Let $D$ be a diagram for a knot $K\subset S^3$ and $G(D)$ be the Turaev surface for $D$. Then $w(D)=g(G(D))+1$.
\end{theorem}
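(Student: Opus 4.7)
The crucial observation is that Theorems \ref{widthm} and \ref{genthm} split into exactly the same four sub-cases, governed by whether $e_+$ lies in a positive cycle of a checkerboard graph and whether $e_-$ lies in a negative cycle, and in each sub-case $w$ and $g(G(\cdot))$ change by the same signed amount under a crossing change. Comparing the statements entry by entry, the quantity $w(D) - g(G(D))$ is therefore invariant under any single crossing change (and hence under any finite sequence of crossing changes that keeps the underlying 4-valent shadow $\Gamma$ fixed). So it suffices to verify the equality $w(D_0) = g(G(D_0)) + 1$ on a single well-chosen diagram $D_0$ sharing the shadow of $D$.

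The plan is to take $D_0$ to be the alternating diagram obtained by alternating over/under along the shadow $\Gamma$ of $D$. Then $D$ is obtained from $D_0$ by a sequence of crossing changes, and by the invariance above one can induct on the number of crossing changes to reduce the theorem to the alternating case: $w(D_0) = 1$ and $g(G(D_0)) = 0$.

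For the base case, $g(G(D_0)) = 0$ is the characterization of alternating knots proved by Dasbach--Futer--Kalfagianni--Lin--Stoltzfus (and even the stronger diagrammatic statement that an alternating diagram has Turaev genus zero is in \cite{das}). To see $w(D_0) = 1$, I would show that in an alternating diagram every crossing looks the same up to the checkerboard coloring, so after possibly swapping the colors every edge of $T_1$ carries the same label (say $\alpha_+$) and every edge of $T_2$ carries the dual label $\alpha_-$. The formula $A(s) - M(s) = \tfrac{1}{2}\eta(s)$ then reduces $\eta(s)$ to the number of $\alpha_+$ edges in $s$, which is just $|t_1| = |V(T_1)| - 1$, a quantity independent of the Kauffman state $s$. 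Thus $\eta$ is constant on $S(D_0, \varepsilon)$ and $w(D_0) = 1$.

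The main obstacle I expect is the uniform-labeling lemma in the base case: one must carefully check, from the local pictures in Figure \ref{edgemark} together with the alternating condition, that all crossings of $D_0$ contribute the same type of edge (either all $\alpha_\pm$ or all $\beta_\pm$) to the checkerboard graphs. Once this is verified, the induction driven by Theorems \ref{widthm} and \ref{genthm} is immediate bookkeeping, since the two theorems were designed precisely to have matched case structures.
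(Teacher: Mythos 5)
Your proposal follows essentially the same route as the paper: the paper's proof is precisely the observation that Theorems \ref{widthm} and \ref{genthm} have matching case structures, so a sequence of crossing changes reduces everything to an alternating diagram, where $w=1$ is quoted from \cite{alt} and $g(G(D))=0$ from \cite{das}. The only point where you diverge is that you try to reprove the base case $w(D_0)=1$, and there your uniform-labeling claim is too strong: in an alternating diagram all edges of $T_1$ are positive and all edges of $T_2$ negative, but the $\alpha$ versus $\beta$ designation depends on the orientation-sign of the crossing and will in general be mixed. The conclusion survives anyway, since $\eta(s)=\#\{\alpha_+\text{ edges in }t_1\}-\#\{\beta_-\text{ edges in }t_2\}=|t_1|-\#\{\beta\text{-type crossings}\}$, which is independent of $s$ because $t_1$ is a spanning tree.
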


\begin{proof}
Let $D$ be a diagram for $K$. If $D$ is an alternating diagram, then the Kauffman states appear on only one Maslov-Alexander diagonal, and $w(D)=1$ (cf. \cite{alt}). Also, if $D$ is an alternating diagram, then $G(D)$ is a sphere (cf. \cite{das}), and hence the result holds for alternating knots. Since any knot diagram can be obtained from an alternating diagram through a sequence of crossing changes, Theorem \ref{genthm} and Theorem \ref{widthm} imply the result.
\end{proof}

Theorem \ref{width} along with the algorithm of Theorem \ref{turgen} give a method to calculate the width of the Kauffman state complex for a diagram $D$. The main theorem (Theorem \ref{main}) is a direct consequence of the previous theorem and Equation \ref{homineq}.\bigskip

We elaborate on the example of the $8_{19}$ knot developed throughout the paper. Figure \ref{kstate} shows a Kauffman state $s$ where $A(s)-M(s)=3$, and Figure \ref{kstate2} shows a state $s$ where $A(s)-M(s)=2$.
\begin{figure}[h]
\includegraphics[scale=0.5]{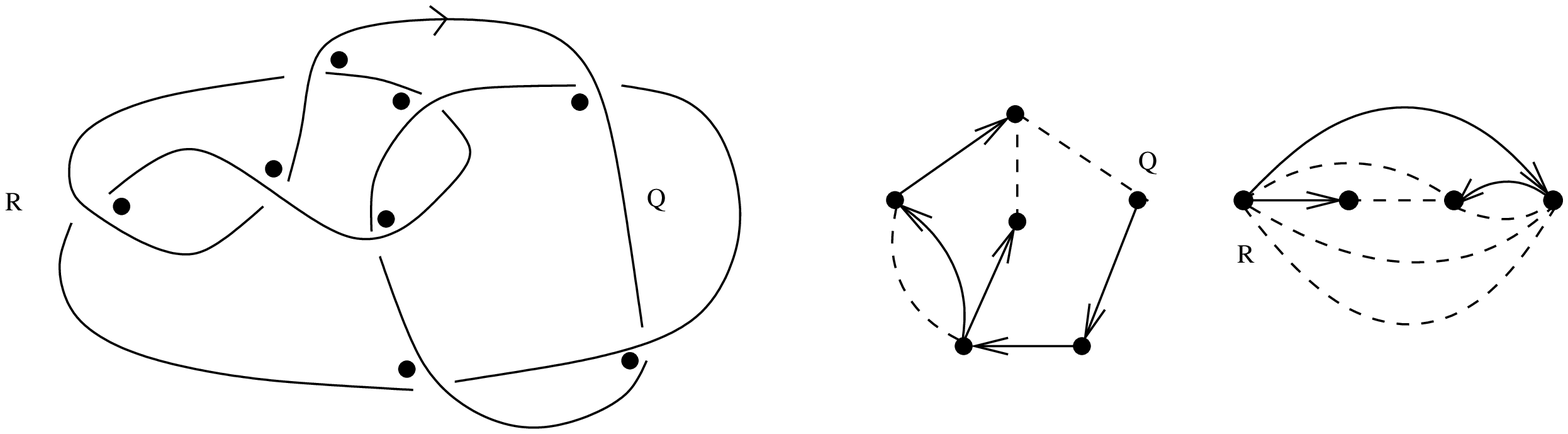}
\caption{For this Kauffman state, $\eta(s)=4$, and hence $A(s)-M(s)=2$.} \label{kstate2}
\end{figure}
In fact, there are 27 Kauffman states for this diagram of $8_{19}$. The number of Kauffman states in each bigrading is listed in Table \ref{tab1}. From this information, one can see that $w(D)=2$. Figure \ref{kstate} shows a state on the maximal diagonal, and Figure \ref{kstate2} shows a state on the minimal diagonal.

\begin{table}[h]
\caption{This shows the number of Kauffman states in each bigrading.}
\begin{tabular}{|c||c|c|c|c|c|c|c|}
\hline
  Alexander$\backslash$Maslov & -6 & -5 & -4 & -3 & -2 & -1 & 0 \\
  \hline\hline
  -3& 1&&&&&&\\
  \hline
  -2& & 2& 1&&&&\\
  \hline
  -1& & & 3 & 3 & & &\\
  \hline
  0&&&&3&4&&\\
  \hline
  1&&&&&3&3&\\
  \hline
  2&&&&&&2&1\\
  \hline
  3&&&&&&&1\\
  \hline
\end{tabular}
\label{tab1}
\end{table}

Figure \ref{vcount} shows that for the given diagram of $8_{19}$, $V=3$ and $F=5$. Since the diagram has $8$ crossings, it follows the $E=8$. Therefore the Euler characteristic of $G(D)$ is zero, and the genus of $G(D)$ is one. This verifies that $w(D)=g(G(D))+1$. In fact, the knot Floer width of $8_{19}$ is two (cf. \cite{bald}), and since $8_{19}$ is non-alternating, its Turaev genus is one.

\section{A skein relation for genus and width}
\label{skeinsec}

In this section skein relations for the genus of the Turaev surface and for the width of a diagram are developed. Each of these skein relations is for a link diagram. Therefore, these relations cannot immediately be used to calculate knot Floer width or Turaev genus. Instead, they give us an upper bound for each.

In order to define the relations, we must expand our view from knots to links. The construction of the Turaev surface can be generalized to include links. If the diagram of the link is non-split (ie. there is no circle in the plane that does not intersect the diagram, where part of the diagram lies both inside and outside the circle), then both $\mathbb{D}(A)$ and $\mathbb{D}(B)$ are connected, and the construction of the Turaev surface is the same as before. However, if the link diagram is split, then $D=D_1\coprod\dots\coprod D_n$ is a disjoint union of non-split diagrams. Each $D_i$ is called a {\it split component of $D$}. The Turaev surface corresponds to a disjoint union of surfaces, one for each split component of the link diagram. 

For a link diagram $D$, let $\chi(G(D))$ be the Euler characteristic of the Turaev surface. If $D = D_1\coprod D_2$ is a split diagram with split components $D_1$ and $D_2$, then the Turaev surface $G(D)$ is the disjoint union $G(D_1)\coprod G(D_2)$, and hence $\chi(G(D))=\chi(G(D_1))+\chi(G(D_2))$. Also, if $D$ is a non-split alternating diagram, then $\chi(G(D))=2$.
\begin{figure}[h]
\includegraphics[scale=.7]{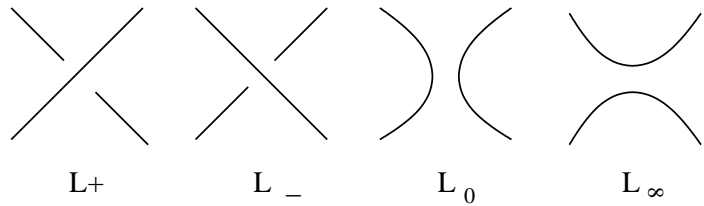}
\caption{The links in the skein relation.} \label{skein}
\end{figure}
\begin{theorem}
\label{skthm} Let $L_+,L_-,L_0,$ and $L_\infty$ be link diagrams as in Figure \ref{skein}. Then the following skein relation holds:
$$\chi(G(L_+))+ \chi(G(L_-)) = \chi(G(L_0))+ \chi(G(L_\infty))-2.$$
\end{theorem}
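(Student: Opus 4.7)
The plan is to apply the Euler-characteristic formula from Theorem \ref{turgen}, extended additively to split diagrams as noted at the start of this section. For any link diagram $D$, write $V(D)$ for the number of circles obtained by $A$-splicing every crossing of $D$, $F(D)$ for the number of circles obtained by $B$-splicing every crossing, and $E(D)$ for the number of crossings. Then
$$\chi(G(D)) = V(D) - E(D) + F(D),$$
whether $D$ is split or not, since each split component contributes its own $V$, $E$, and $F$ and the Turaev surface is the disjoint union of those of the components. The strategy is to compare these three counts across the four diagrams $L_+, L_-, L_0, L_\infty$, which differ only inside a small disk around one distinguished crossing.

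The key local observation is that at the distinguished crossing, the $A$-splicing and the $B$-splicing produce the two tangles appearing in $L_0$ and $L_\infty$. After fixing conventions so that the $A$-splicing of the crossing in $L_+$ matches the local picture of $L_0$ (and hence the $B$-splicing matches $L_\infty$), the $A$-splicing of $L_-$ matches $L_\infty$ and its $B$-splicing matches $L_0$. Since splicing every crossing of $L_+$ using $A$-splicings is the same as first resolving the distinguished crossing to $L_0$ and then $A$-splicing the remaining crossings, the resulting collection of disjoint circles is identical. This gives
$$V(L_+) = V(L_0), \quad F(L_+) = F(L_\infty), \quad V(L_-) = V(L_\infty), \quad F(L_-) = F(L_0).$$
Moreover $E(L_+) = E(L_-) = E(L_0) + 1 = E(L_\infty) + 1$.

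Substituting into $\chi(G(D)) = V(D) - E(D) + F(D)$ and adding,
$$\chi(G(L_+)) + \chi(G(L_-)) = \bigl[V(L_0) + F(L_0)\bigr] + \bigl[V(L_\infty) + F(L_\infty)\bigr] - 2E(L_+),$$
while
$$\chi(G(L_0)) + \chi(G(L_\infty)) = \bigl[V(L_0) + F(L_0)\bigr] + \bigl[V(L_\infty) + F(L_\infty)\bigr] - 2(E(L_+) - 1).$$
Taking the difference yields the claimed identity $\chi(G(L_+)) + \chi(G(L_-)) = \chi(G(L_0)) + \chi(G(L_\infty)) - 2$.

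The only subtlety I expect is bookkeeping: correctly matching the $A/B$ labeling at the distinguished crossing to the $L_0/L_\infty$ resolutions of Figure \ref{skein}, and confirming that the identity $\chi(G(D)) = V(D) - E(D) + F(D)$ survives for split diagrams where the Turaev surface may be disconnected. Both are handled by the additivity of $\chi$, $V$, $E$, and $F$ over split components, and by noting that swapping the $A/B$ convention only interchanges the roles of $V(L_\pm)$ and $F(L_\pm)$ in a symmetric way that leaves the final sum unchanged.
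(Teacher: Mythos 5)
Your proposal is correct and follows essentially the same argument as the paper: both count circles in the all-$A$ and all-$B$ splicings together with the number of crossings, observe that resolving the distinguished crossing matches these counts pairwise among $L_\pm$, $L_0$, $L_\infty$, and substitute into $\chi = V - E + F$. The only difference is notational (the paper writes $a_i, b_i, c_i$ where you write $V, F, E$), and your explicit remarks on split diagrams and the $A/B$ labeling conventions are consistent with the paper's setup.
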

\begin{proof}

Let $a_i$ be the number of circles in the all $A$-splicing for $L_i$, $b_i$ be the number of circles in the all $B$-splicing for $L_i$, and $c_i$ the number of edges in either ribbon graph for $L_i$, where $i=+,-,0,$ or $\infty$. Since $L_0$ and $L_\infty$ have one less crossing than $L_+$ and $L_-$, it follows that $c_+=c_-=c_0+1=c_\infty+1$. Also since $L_0$ is obtained from $L_+$ by one $A$-splicing, the number of circles in the all $A$-splicing for $L_0$ is the same for $L_+$. Thus $a_+=a_0$. Similarly, $b_+=b_\infty,a_-=a_\infty$, and $b_-=b_0$. The result follows from these equations.
\end{proof}

The skein relation on Euler characteristic can be viewed as a relation on genus (and in light of Theorem \ref{width}, as a relation on width). Let $D$ be a link diagram. If $D$ is a non-split diagram, then let $\overline{g}(D)=g(G(D))$, the genus of the Turaev surface. However, if $D=D_1\coprod D_2$ is the disjoint union of two link diagrams $D_1$ and $D_2$, let $\overline{g}(D_1\coprod D_2)=\overline{g}(D_1)+\overline{g}(D_2)-1$. This normalization is introduced since $2-2\overline{g}(D_1\coprod D_2)=\chi(G(D_1\coprod D_2))$. Moreover, if $D$ is a non-split alternating diagram, then $\overline{g}(D)=0$. The skein relation of Theorem \ref{skthm} becomes
\begin{equation}
\overline{g}(L_+)+\overline{g}(L_-)=\overline{g}(L_0) + \overline{g}(L_\infty) +1. \label{genskein}
\end{equation}

Equation \ref{genskein} can also be viewed in terms of width. If $D$ is a non-split diagram, define $\overline{w}(D)=w(D)$, the width of the diagram. If $D=D_1\coprod D_2$ is a disjoint union of diagrams $D_1$ and $D_2$, introduce the normalization $\overline{w}(D_1\coprod D_2)=\overline{w}(D_1)+\overline{w}(D_2)-2$. Also, if $D$ is a non-split alternating diagram, then $\overline{w}(D)=1$. Theorem \ref{width} implies that the skein relation of Equation \ref{genskein} becomes
\begin{equation}
\overline{w}(L_+)+\overline{w}(L_-)=\overline{w}(L_0)+\overline{w}(L_\infty)+1. \label{widthskein}
\end{equation}

\noindent\textbf{Remarks:}
\begin{enumerate}
\item For most skein relations, the base case is a disjoint union of unknots; however, the base case of this skein relation is a disjoint union of alternating diagrams. 
\item Each of the checkerboard graphs for an alternating diagram is composed entirely of positive or negative edges. To transform an arbitrary link diagram into an alternating diagram, choose crossing changes that make all of the edges in one of the checkerboard graphs positive (and thus all of the edges in the other checkerboard graph negative).
\item If a link diagram is alternating, then for any crossing, the diagrams corresponding to $L_0$ and $L_\infty$ are also alternating.
\end{enumerate}

These remarks together imply that one can use Equation \ref{widthskein} to calculate the width of a diagram without computing the entire Kauffman state complex. We conclude with a simple example.

\begin{figure}[h]
\includegraphics[scale=.5]{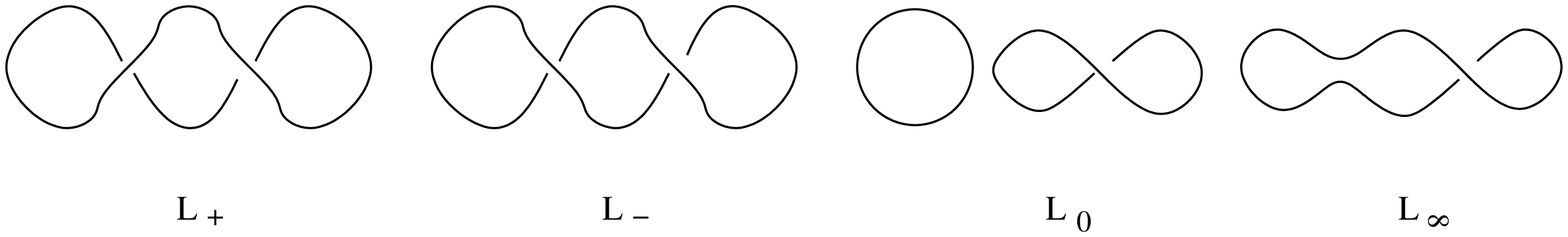}
\caption{Four link diagrams in the skein relation of Equation \ref{widthskein}.}
\end{figure}
Equation \ref{widthskein} states that 
$$\overline{w}(L_+)=-\overline{w}(L_-)+\overline{w}(L_0)+\overline{w}(L_\infty)+1.$$
Since $L_-$ and $L_\infty$ are non-split diagrams of alternating knots, it follows that $\overline{w}(L_-)=\overline{w}(L_\infty)=1$. However, $L_0=D_1\coprod D_2$ is a disjoint union of two non-split alternating diagrams $D_1$ and $D_2$. Therefore $\overline{w}(L_0)=\overline{w}(D_1)+\overline{w}(D_2)-2=0.$ Hence, $\overline{w}(L_+)=1$. This also follows directly from the Kauffman state complex since there is only one Kauffman state for the diagram $L_+$. However, in general, the number of Kauffman states increases exponentially with the number of crossings. Using the skein relation to calculate width depends only on the number of crossing changes needed to make $L_+$ into an alternating diagram.


\begin{thebibliography}{99}

\bibitem{bald} J. A. Baldwin and W. D. Gillam, {\it Computations of Heegard-Floer knot homology}. arXiv:math.GT/0610167.

\bibitem{stoltz} A. Champanerkar, I. Kofman, and N. Stoltzfus. {\it Graphs on Surfaces and Khovanov homology}. arXiv:math/0705.3453

\bibitem{das} Oliver T. Dasbach, David Futer, Efstratia Kalfagianni, Xiao-Song Lin, and Neal Stoltzfus. {\it The Jones polynomial and graphs on surfaces}. arXiv:math.GT/0605571v3.

\bibitem{gilmer} P. Gilmer and R. Litherland. {\it The duality conjecture in formal knot theory}. Osaka Journal of Math. Vol. 23, No. 1: 229-247, 1986.

\bibitem{kauf} L. H. Kauffman. {\it Formal Knot Theory}. Mathematical Notes 30, Princeton University Press (1983).

\bibitem{comb} C. Manolescu, P. Ozsv\'ath, and Sarkar. {\it A combinatorial description of knot Floer homology}. arXiv:math:GT/0607691.

\bibitem{mano} C. Manolescu and P. Ozsv\'ath. {\it On the Khovanov and knot Floer homologies of quasi-alternating links}. arXiv:math:GT/0708.3249v1.

\bibitem{man} V. Manturov. {\it Minimal diagrams of classical and virtual links}. arXiv:math.GT/0501393.

\bibitem{alt} P. S. Ozsv\'ath and Z. Szab\'o. {\it Heegard Floer homology and alternating knots}. Geom. Topol., 7:225-254(electronic), 2003.

\bibitem{oz} P. S. Ozsv\'ath and Z. Szab\'o. {\it Holomorphic disks and knot invariants.} Adv. Math. 186. no. 1. 58-116, 2004.

\bibitem{ras} J. A. Rasmussen. {\it Floer homology and knot complements}. Ph.D. thesis, Harvard University. 2003.

\bibitem{turvpaper}V. G. Turaev. {\it A simple proof of the Murasugi and Kauffman theorems on alternating links}. Enseign. Math. (2), 33(3-4):203-225, 1987.

\end{thebibliography}
\end{document}